\newtheorem{assm}{Assumption}
\newtheorem{lema}{Lemma}
\newtheorem{thme}{Theorem}
\title{\LARGE \bf
 DeepONet of Dynamic Event-Triggered Backstepping Boundary Control for Reaction-Diffusion PDEs 
}
\author{Hongpeng Yuan, Ji Wang and Mamadou Diagne
\thanks{This work was supported in by the National Natural Science
Foundation of China under Grant 62203372.}
\thanks{Hongpeng Yuan and Ji Wang are with Department of Automation, Xiamen University, Xiamen, Fujian 361005, China; Mamadou Diagne is with Department of Mechanical and Aerospace Engineering, UC San Diego, 9500 Gilman Drive, La Jolla, CA, 92093-0411 (e-mail: \tt\small 23220221151757@stu.xmu.edu.cn; \tt\small jiwang@xmu.edu.cn; mdiagne@ucsd.edu).}
}
\begin{document}

\maketitle
\thispagestyle{empty}
\pagestyle{empty}

\begin{abstract}

We present an event-triggered  boundary control scheme for a class of reaction-diffusion PDEs using operator learning and backstepping method. Our first-of-its-kind contribution aims at learning the backstepping kernels, which inherently induces the learning of the gains in the event trigger and the control law. 
 The kernel functions in constructing the control law are approximated with neural operators (NOs) to improve the computational efficiency. Then, a dynamic event-triggering mechanism is designed, based on the plant and the continuous-in-time control law using kernels given by NOs, to determine the updating times of the actuation signal. In the resulting event-based closed-loop system, a strictly positive lower bound of the minimal dwell time is found, which is independent of initial conditions. As a result, the absence of a Zeno behavior is guaranteed. Besides, exponential convergence to zero of  the $L^2$ norm of the reaction-diffusion PDE state and the dynamic variable in the event-triggering mechanism is proved via Lyapunov analysis. The effectiveness of the proposed method is illustrated by numerical simulation.

\end{abstract}

\section{INTRODUCTION}

Event-triggered control (ETC) is a control implementation technique where the control input is updated aperiodically (only when necessary), different from the periodic sampled-data control. An ETC system comprises a stability-preserving feedback control law and a triggering mechanism that determines when updates are applied. To prevent Zeno behavior, which leads to infinitely frequent updates in a finite time frame, a minimum time separation between consecutive events must be enforced. By triggering control updates only when needed, ETC optimizes bandwidth usage, lowers computational load, and conserves energy, making it especially beneficial for networked control systems. Some notable ETC works developed for partial differential equations (PDEs) can be seen in \cite{9978722}
for state-feedback form and \cite{9477051}, \cite{RATHNAYAKE2022110026}, \cite{9319184} for output-feedback control, with applications in traffic control, re-entrant 
manufacturing systems, and Stefan problem, shown in \cite{espitia2022traffic}, \cite{diagne2021event}, and \cite{rathnayake2023observer}, respectively.

In the past few decades, an increasing number of learning-based such as reinforcement learning (RL) \cite{yu2021reinforcement}, physics-informed neural networks (PINNs) \cite{garcia2023control} and operator learning methods, including DeepONets and Fourier Neural Operators methods have been applied to solve PDEs and their related control problems, offering advantages such as improved computational efficiency and adaptability to high-dimensional systems.  These methods have been exploited to develop optimal, adaptive, and robust control algorithms for complex nonlinear systems or infinite-dimensional systems, where traditional methods face challenging computational limitations. The present work pertains to DeepONet, which has been proven to speed up the computation of PDE backstepping gain kernel functions. The method has been applied to both reaction-diffusion \cite{krstic2024neural, wang2025deep} and  hyperbolic \cite{wang2023backstepping,zhang2024mitigating,lamarque2024adaptive} PDEs. More precisely, the function-to-function mapping, which is encapsulated into NOs for the computation of the backstepping gain kernel PDEs derived model-based stabilizing control law,  enables the re-computation of the controller gain functions via Neural Networks (NN) when variations occur in the plant functional parameters. DeepONet theory expands the classical "universal approximation theorem" for functions \cite{hornik1989multilayer} by demonstrating that it also holds for nonlinear operators, thereby establishing a universal approximation framework for operator learning \cite{chen1995universal,lu2021learning}.

  In this paper, we develop the computation of backstepping gain kernel to be exploited for the construction of a NO-approximated event-triggered boundary control design for the reaction-diffusion PDE with spatially varying coefficient. The operator to be approximated is given in the form of a hyperbolic-Goursat PDE and the spatially-varying reactivity represent the input of the function-to-function mapping.  The neural operator, used to approximate the kernel, is employed  in the design of the triggering mechanism and lead to an approximated triggering sequence. More precisely, the error of NO-based continuous-in-time controller and piecewise-constant controller is used in the design of event trigger mechanism.  The use of NO  in ETC design significantly enhances the computational efficiency, meanwhile no Zeno behavior and exponential regulation are guaranteed. As we prove $L^2$ exponential convergence with the approximated gains, the DeepONet ETC framework retains the stability property obtained with exact gain kernels. However, the speed of convergence of the closed-loop system is subject to the training data size: more data leads to faster convergence.   To the
  best of the authors’ knowledge, this is the first  
    study about event-triggered control of
  the reaction-diffusion PDEs using neural operators. 

The paper is organized as follows. Section \ref{sec2} presents the   problem formulation and briefly recall the continuous-time controller.  In Section \ref{sec3},  introduce the NO-approximated continuous-in-time control law while Section \ref{sec4} presents the event-triggered NO-approximated control law. The main result is stated in Section \ref{sec5} and illustrative simulation results are presented  in Section \ref{sec6}. The paper ends with concluding remarks  in Section \ref{sec7}.

\textbf{Notation:} The symbol $\mathbb{N}$ denotes the set of natural numbers including zero, and the notation $\mathbb{N}^*$ for the set of natural numbers without 0. We also denote $\mathbb{R}_{+}:=[0,+\infty)$ and $\mathbb{R}_{-}:=(-\infty, 0)$. We use the notation $f[t]$ (e.g., $u,w$) to denote the profile of $f$ at certain $t \geq 0$, i.e., $(f[t])(x)=$ $f(x, t)$ .

\section{Problem Formulation and Continuous-in-time Control Design }\label{sec2} 

\subsection{Problem statement }
Consider the following plant
\begin{align}
	& u_t(x, t)=\varepsilon u_{x x}(x, t)+\lambda(x) u(x, t), \label{1}\\
	&  u_x(0, t)=0,\label{2} \\
	& u_x(1, t)+qu(1,t)=U(t), \label{xu}
\end{align}
$\forall ( x,t) \in [0,1]\times[0,\infty)$, where $U(t)$ is the control input to be designed, and where $u(x,t)$ is the state of the reaction-diffusion PDE.  
The parameters $\varepsilon,   q$ in the reaction-diffusion PDE are positive, and $\lambda \in C^2\left([0,1] ; \mathbb{R}_{+}\right)$. 
\begin{assm}
	The parameters $q, \varepsilon>0, \lambda \in C^2\left([0,1] ; \mathbb{R}_{+}\right)$ satisfy the following relation:
	\begin{align}
		q>\frac{\lambda_{\max }}{2 \varepsilon}+\frac{1}{2}, \label{qlt}
	\end{align}
	where
	\begin{align}
		\lambda_{\max } \triangleq \max _{x \in[0,1]} \lambda(x) .
	\end{align}
\end{assm} 
Assumption 1 is important in ensuring the stability of the
target system under PDE backstepping control with dynamic
event-triggering. According to Assumption 1 we can avoid using
the signal $ u(1, t)$ in the nominal control law. Such avoidance is
crucial for dynamic ETC design due to the challenges associated
with obtaining a meaningful bound on the rate of change of $u(1, t)$. An eigenfunction expansion of the solution of (1)-(3) with $U(t)=0$ shows that the system is unstable when $\min _{x \in[0,1]} \lambda(x)>\varepsilon \pi^2 / 4$.
 
\subsection{Continuous-in-time Control Design }

 In this part, we design a continuous-in-time boundary control law $U(t)$ in \eqref{xu}. Consider the invertible backstepping transformation
\begin{align}
	{w}(x, t)= &  {u}(x, t)-\int_0^x k(x, y)  {u}(y, t) \mathrm{d} y, \label{10} 
\end{align}
where $k(x,y)$ is given by
\begin{align}
	k_{x x}(x, y)-k_{y y}(x, y) & =\frac{\lambda(y)}{\varepsilon} k(x, y), \label{k7}\\
	  k_y(x, 0) & = 0, \\
	k(x, x) & =-\frac{1}{2 \varepsilon} \int_0^x \lambda(y) d y. \label{k9}
\end{align}
  The kernel equations \eqref{k7}--\eqref{k9} admit a unique solution on
  the triangular domain $\mathcal{T}={0 \leq y \leq x \leq 1}$ \cite{rathnayake2025performance}. Applying the backstepping transformation \eqref{10} into the original system \eqref{1}--\eqref{xu}  with choosing the control law in \eqref{xu} as $U_f(t)$ defined as
\begin{align}
	U_f(t)=\int_{0}^{1} K(y) {u}(y,t)dy\label{Ufn}
\end{align}
where
\begin{align}
	K(y)=\wp k(1, y)+k_x(1, y)
\end{align}
with
\begin{align}
\wp=q-\frac{1}{2 \varepsilon} \int_0^1 \lambda(y) d y,
\end{align}
we arrive at the following target system:
\begin{align}
	w_t(x, t) & =\varepsilon w_{x x}(x, t), \\
  w_x(0, t) & = 0, \\
	w_x(1, t) & =-\wp w(1, t).
\end{align}
Recalling \eqref{qlt}, we can get 
\begin{align}
	\wp>\frac{1}{2}. \label{wp}
\end{align}
The inverse transformation of \eqref{10} is given by
\begin{align}
	u(x, t)=w(x, t)+\int_0^x l(x, y) w(y, t) d y
\end{align}
where $l(x, y)$ satisfies
\begin{align}
	l_{x x}(x, y)-l_{y y}(x, y) & =-\frac{\lambda(x)}{\varepsilon} l(x, y), \\
	 l_y(x, 0) & = 0 ,\\
	l(x, x) & =-\frac{1}{2 \varepsilon} \int_0^x \lambda(y) d y.
\end{align}

\section{NO-approximated Continuous-in-time Control Design}\label{sec3}

Neural operators can be used to approximate the operator mapping of functions. In this section, we introduce the neural operator using DeepONet to approximate the mapping from the nominal system parameters to the backstepping kernels. 
 The kernel operator $\mathcal{K}: C^2 ([0,1] ; \mathbb{R}_{+} )   \rightarrow  {C}^2(\mathcal{T}) $ is defined by
\begin{align}
	\mathcal{K}\left(\lambda\right)(x, y)=:\left( k(x, y) \right) \label{calk}
\end{align}
Consider a nonlinear mapping $\mathcal{G}: \mathcal{U} \mapsto \mathcal{V}$, where $\mathcal{U}$ and $\mathcal{V}$ are function spaces. Its neural operator approximation can be defined as
\begin{align}
	\mathcal{G}_{\mathbb{N}}\left(\mathbf{u}_n\right)(y)=\sum_{k=1}^p  {g^{\mathcal{N}}\left(\mathbf{u}_n ; \vartheta^{(k)}\right)}  {f^{\mathcal{N}}\left(y ; \theta^{(k)}\right)},
\end{align}
where $\mathbf{u}_n$ is the evaluation of function $u \in \mathcal{U}$ at points $x_i=x_1, \ldots, x_n,$ $\ p$ is the number of basis components in the target space, $y \in Y$ is the location of the output function $v(y)$ evaluations, and $g^{\mathcal{N}}, f^{\mathcal{N}}$ are NNs termed branch and trunk networks.

\begin{thme} \label{theorem1}
	(DeepONet universal approximation theorem)  Let $X \subset \mathbb{R}^{d_x}, Y \subset \mathbb{R}^{d_y}$ be compact sets of vectors $x \in X$ and $y \in Y, d_x, d_y \in \mathbb{N}$. Let $\mathcal{U}: X \rightarrow$ $\mathbb{U} \subset \mathbb{R}^{d_u}$ and $\mathcal{V}: Y \rightarrow \mathbb{V} \subset \mathbb{R}^{d_v}$ be sets of continuous functions $\mathrm{u}(x)$ and $\mathrm{v}(y), d_{\mathrm{u}}, d_{\mathrm{v}} \in \mathbb{N}$. Assume the operator $\mathcal{G}: \mathcal{U} \rightarrow \mathcal{V}$ is continuous. Then, for all $\iota>0$, there exists a $m^{\star}, p^{\star} \in \mathbb{N}$ such that for each $m \geq m^{\star}, p \geq p^{\star}$, there exist $\theta^{(k)}, \vartheta^{(k)}$, neural networks $f^{\mathcal{N}}\left(\cdot ; \theta^{(k)}\right), g^ {\mathcal{N}}\left(\cdot ; \vartheta^{(k)}\right), k=$ $1, \ldots, p$ and $x_j \in X, j=1, \ldots, m$, with corresponding $\mathbf{u}_m=\left( {u}\left(x_1\right),  {u}\left(x_2\right), \cdots,  {u}\left(x_m\right)\right)^{\top}$, such that
	
	$$
 \left|\mathcal{G}(\mathbf{u})(y)-\mathcal{G}_{\mathbb{N}}\left(\mathbf{u}_m\right)(y)\right|<\iota,
	$$	
	for all functions $\mathbf{u} \in \mathcal{U}$ and all values $y \in Y$ of $\mathcal{G}(\mathbf{u})(y) \in \mathcal{V}$.
\end{thme}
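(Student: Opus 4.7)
The plan is to bound $|\mathcal{G}(\mathbf{u})(y)-\mathcal{G}_{\mathbb{N}}(\mathbf{u}_m)(y)|$ by splitting it into three contributions, each controlled by $\iota/3$: (i) a discretization error from replacing $u$ by its sample vector $\mathbf{u}_m$, (ii) a finite-rank tensor-product truncation error in the output variable $y$, and (iii) a neural-network approximation error for each resulting factor, controlled by the classical Hornik universal approximation theorem for functions on compact sets.

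For Step (i), under the standard DeepONet hypothesis that $\mathcal{U}\subset C(X;\mathbb{R}^{d_u})$ is pre-compact, the Arzel\`a--Ascoli theorem provides equicontinuity, so for any $\eta>0$ I can choose nodes $x_1,\ldots,x_m\in X$ and a continuous reconstruction operator $R_m:\mathbb{R}^{m d_u}\to C(X;\mathbb{R}^{d_u})$ (e.g., piecewise-linear interpolation) with $\sup_{u\in\mathcal{U}}\|u-R_m\mathbf{u}_m\|_\infty<\eta$. Since $\mathcal{G}$ is continuous on the compact set $\mathcal{U}$ and therefore uniformly continuous there, taking $\eta$ small enough gives $|\mathcal{G}(u)(y)-H(\mathbf{u}_m,y)|<\iota/3$ uniformly in $u$ and $y$, where $H(\mathbf{z},y):=\mathcal{G}(R_m\mathbf{z})(y)$ is a continuous function on the compact product set $\Omega\times Y$ with $\Omega:=\{\mathbf{u}_m:u\in\mathcal{U}\}\subset\mathbb{R}^{m d_u}$.

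For Step (ii), working componentwise in the $d_v$ output coordinates, I would fix a basis $\{F_k\}_{k\geq 1}$ of $C(Y)$ (polynomials in $y$ suffice by Stone--Weierstrass, or an orthonormal smooth basis) and expand $H(\mathbf{z},\cdot)$. The family $\{H(\mathbf{z},\cdot):\mathbf{z}\in\Omega\}$ is compact in $C(Y)$ as the continuous image of a compact set, so the basis tails vanish uniformly in $\mathbf{z}$; this produces $p\in\mathbb{N}$ and continuous coefficient maps $G_k:\Omega\to\mathbb{R}^{d_v}$ satisfying $\bigl|H(\mathbf{z},y)-\sum_{k=1}^p G_k(\mathbf{z})F_k(y)\bigr|<\iota/3$ uniformly on $\Omega\times Y$. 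Continuity of each $G_k$ is automatic because the defining linear projection is bounded on $C(Y)$. For Step (iii), Hornik's theorem then delivers branch networks $g^{\mathcal{N}}(\cdot;\vartheta^{(k)})$ approximating $G_k$ on $\Omega$ and trunk networks $f^{\mathcal{N}}(\cdot;\theta^{(k)})$ approximating $F_k$ on $Y$, each to tolerance $\iota/(3pM)$ where $M$ bounds $\|G_k\|_\infty$ and $\|F_k\|_\infty$. The elementary product estimate $|G_k F_k-g^{\mathcal{N}}f^{\mathcal{N}}|\leq\|G_k\|_\infty|F_k-f^{\mathcal{N}}|+|G_k-g^{\mathcal{N}}|(\|F_k\|_\infty+\iota/(3pM))$, summed over $k=1,\ldots,p$, yields the final $\iota/3$ bound; combining with Steps (i) and (ii) via the triangle inequality closes the argument.

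The main obstacle will be Step (ii): one must establish joint uniform validity of the finite-rank expansion over the entire product $\Omega\times Y$ while simultaneously guaranteeing continuity of the coefficient maps $G_k$, which is what allows Hornik's theorem to be invoked in Step (iii). A polynomial or orthonormal smooth basis on $Y$ is the convenient choice because its expansion coefficients are continuous bounded linear functionals, and the pre-compactness of $H(\Omega,\cdot)$ in $C(Y)$ then delivers the required uniform tail bound through an Arzel\`a--Ascoli / equicontinuity argument. The remaining steps --- sampling the input function and invoking Hornik --- are routine once the tensor-product decomposition is in hand.
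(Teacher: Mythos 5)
The paper does not actually prove Theorem~\ref{theorem1}: it is imported verbatim from the operator-learning literature \cite{chen1995universal,lu2021learning} and used as a black box (its only role here is to justify Lemma~\ref{lema1}). Your outline is therefore being compared against the standard proof in those references rather than against anything in this manuscript, and in that comparison it is essentially the canonical argument: encode the input by sampling (controlled by compactness of $\mathcal{U}$ in $C(X)$ via Arzel\`a--Ascoli), reduce to a continuous function on a compact finite-dimensional set, separate variables into a finite tensor-product form, and invoke the scalar universal approximation theorem \cite{hornik1989multilayer} for the branch and trunk factors. You also correctly flag that compactness of $\mathcal{U}$ is an implicit hypothesis of the statement as written; without it the theorem is false, so making it explicit is a genuine improvement over the paper's phrasing.

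Two technical points in your sketch need repair before it is a proof. First, in Step (i) the function $H(\mathbf{z},y):=\mathcal{G}(R_m\mathbf{z})(y)$ is not obviously well defined: the reconstruction $R_m\mathbf{u}_m$ generally does not lie in $\mathcal{U}$, and $\mathcal{G}$ is only assumed continuous on $\mathcal{U}$. You must either extend $\mathcal{G}$ continuously from the compact set $\mathcal{U}$ to a neighborhood in $C(X;\mathbb{R}^{d_u})$ (Dugundji's extension theorem does this), or restructure the argument so that only values of $\mathcal{G}$ on $\mathcal{U}$ are ever used, as Chen and Chen do. Second, in Step (ii) the parenthetical ``polynomials in $y$ suffice by Stone--Weierstrass'' does not deliver what you need: Stone--Weierstrass gives density, not a Schauder basis, so the monomial ``expansion coefficients'' are neither well defined as bounded linear functionals nor equipped with a uniform tail bound over the compact family $\{H(\mathbf{z},\cdot)\}$. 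The clean fix is to replace the basis expansion by a finite $\iota/6$-net $h_1,\dots,h_p$ of the compact image $H(\Omega,\cdot)\subset C(Y)$ together with a continuous partition of unity $\{\phi_k\}$ on $\Omega$ subordinate to the preimages of the corresponding balls; then $\sum_{k=1}^{p}\phi_k(\mathbf{z})h_k(y)$ is the required finite-rank approximation with continuous coefficient maps, and Step (iii) goes through unchanged. With those two adjustments your argument is a correct reconstruction of the cited proof.
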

	The kernel operator $\mathcal{K}$ maps the system parameters to the backstepping kernels, such that there exists a neural operator approximates the kernel operator $\mathcal{K}$, then we have the following lemma:
 \begin{lema} \label{lema1}
 	For all $\iota>0$, there exists a neural operator $\hat{\mathcal{K}}$ such that  for all $(x, y) \in \mathcal{T}$
 	\begin{align}
 		& |\mathcal{K}(\lambda)(x, y)-\hat{\mathcal{K}}(\lambda)(x, y)| \notag\\
 		& +\left|2 \frac{d}{d x}(\mathcal{K}(\lambda)(x, x)-\hat{\mathcal{K}}(\lambda)(x, x))\right| \notag\\
 		& +|\left(\partial_{x x}-\partial_{y y}\right)(\mathcal{K}(\lambda)(x, y)-\hat{\mathcal{K}}(\lambda)(x, y)) \notag\\
 		& -\lambda(y)(\mathcal{K}(\lambda)(x, y)-\hat{\mathcal{K}}(\lambda)(x, y)) |<\iota .\label{supk}
 	\end{align}

 \end{lema}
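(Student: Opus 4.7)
My plan is to reduce the three-term error in \eqref{supk} to a single $C^2$-type bound on the neural-operator approximation of $k$, and then to invoke Theorem \ref{theorem1} (in a $C^k$-strengthened form) to supply that bound. As a first step I would establish that the kernel operator $\mathcal{K}:C^2([0,1];\mathbb{R}_+)\to C^2(\mathcal{T})$ is locally Lipschitz continuous. This follows from the classical well-posedness of the Goursat system \eqref{k7}--\eqref{k9} already invoked via \cite{rathnayake2025performance}: after a change to characteristic coordinates, iterating the resulting integral equation yields a convergent successive-approximation representation of $k$ whose derivatives up to second order are bounded uniformly in terms of $\|\lambda\|_{C^2}$, and subtracting two such expansions gives Lipschitz dependence of $k$ on $\lambda$ in the $C^2$ topology. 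This continuity is the pre-requisite that makes the operator universal approximation applicable to $\mathcal{K}$.

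Next, I would invoke Theorem \ref{theorem1}, or more precisely its $C^2$-strengthening obtained by combining Hornik's $C^k$-density theorem for feed-forward networks with smooth activations and the same branch/trunk decomposition used in the proof of the $C^0$ version, to produce a single DeepONet $\hat{\mathcal{K}}$ such that, for any $\iota'>0$,
$$
\sup_{(x,y)\in\mathcal{T}}\sum_{0\le i+j\le 2}\bigl|\partial_x^i\partial_y^j\bigl(\mathcal{K}(\lambda)(x,y)-\hat{\mathcal{K}}(\lambda)(x,y)\bigr)\bigr|<\iota'.
$$
Writing $\tilde{k}:=\mathcal{K}(\lambda)-\hat{\mathcal{K}}(\lambda)$ and using the chain rule $\tfrac{d}{dx}\tilde{k}(x,x)=\tilde{k}_x(x,x)+\tilde{k}_y(x,x)$ together with $|\lambda(y)|\le\lambda_{\max}$, the three summands in \eqref{supk} are bounded, respectively, by $\iota'$, by $2\iota'$ (after absorbing the factor of two in the trace-derivative term), and by $(2+\lambda_{\max})\iota'$; choosing $\iota'\le\iota/(5+\lambda_{\max})$ closes the lemma.

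The main obstacle is precisely the $C^2$ strengthening of Theorem \ref{theorem1}: as stated, the DeepONet universal approximation guarantees only a pointwise ($C^0$) bound, whereas the last summand of \eqref{supk} involves second-order derivatives of the neural-operator output. Two routes are available. The first is to cite a derivative-aware version of the operator universal approximation theorem, which applies whenever the trunk activations are $C^2$ (e.g.\ $\tanh$), since the $C^k$ Hornik theorem for networks lifts to operators along the same lines as Theorem \ref{theorem1}. The second, fallback route is to approximate the three continuous auxiliary operators $\lambda\mapsto k$, $\lambda\mapsto\tfrac{d}{dx}k(x,x)$ and $\lambda\mapsto(\partial_{xx}-\partial_{yy}-\lambda)k$ separately with tolerance $\iota/3$, at the expense of replacing the single NO by a triple of coupled NOs. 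The first route preserves the single-operator structure needed downstream in the dynamic event-trigger analysis and is therefore the one I would pursue.
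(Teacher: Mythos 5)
Your proposal is correct and follows the same high-level route as the paper: well-posedness of the Goursat system \eqref{k7}--\eqref{k9} gives a well-defined, continuous kernel operator $\mathcal{K}$, and the DeepONet universal approximation theorem then supplies $\hat{\mathcal{K}}$. The difference is one of rigor rather than strategy. The paper's proof is two sentences --- the kernel PDE has a unique $C^2(\mathcal{T})$ solution, therefore Theorem \ref{theorem1} yields the error bound $\iota$ --- and it silently applies a $C^0$ approximation theorem to the quantity \eqref{supk}, which involves first- and second-order derivatives of the approximation error. You correctly identify this derivative-order mismatch as the real content of the lemma and offer two ways to close it: a $C^2$-strengthened (derivative-aware) operator approximation theorem, or separate approximation of the three auxiliary operators $\lambda\mapsto k$, $\lambda\mapsto \tfrac{d}{dx}k(x,x)$, and $\lambda\mapsto(\partial_{xx}-\partial_{yy}-\lambda)k$. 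Your preference for the first route is well judged: the second route produces a triple of outputs whose second and third components are not literally the trace derivative and the wave-operator image of the first, whereas the downstream analysis (the perturbations $\delta_{k0}$, $\delta_{k1}$ in the target system and the gain $\hat{K}(y)=\wp\hat{k}(1,y)+\hat{k}_x(1,y)$) genuinely requires derivatives of the single implemented $\hat{k}$. Your Lipschitz-continuity argument for $\mathcal{K}$ via successive approximations and the final bookkeeping $(5+\lambda_{\max})\iota'\le\iota$ are both sound. In short, you prove more than the paper does; what the paper's version buys is only brevity, at the cost of leaving the derivative issue implicit.
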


Proof: The system \eqref{k7}-\eqref{k9} has a unique $ {C}^2(\mathcal{T})$ solution, therefore the neural operator $\hat{\mathcal{K}}\left(\lambda\right)(x, y)$ could approximate the kernels for given nominal parameters. Using Theorem \ref{theorem1}, we can obtain a maximum approximation error defined as $\iota$. This finishes the proof of Lemma \ref{lema1}.
\begin{thme}
	 There exists a sufficiently small $\iota^*  >0$ such that the feedback law
	\begin{align}
			U_{NO}(t)=\int_0^1 \hat{K}(y) u(y, t) d y \label{NOcontrol}
	\end{align}	
	where $\hat{K}(y)=\wp \hat{k}(1, y)+\hat{k}_x(1, y)$ with NO gain kernels $\hat{k}=\hat{\mathcal{K}}(\lambda)$ of approximation accuracy $\iota \in\left(0, \iota^*\right)$ in relation to the exact backstepping kernel $k=\mathcal{K}(\lambda)    $ ensures that the closed-loop system satisfies the exponential stability bound
\begin{align}
		\|u[t]\| \leq M \mathrm{e}^{-\frac{\sqrt{\varepsilon}}{2}t }\left\|u[0]\right\|,  \label{um0}
\end{align}	
	where
	\begin{align}
			M(\iota, \bar{\lambda})=\left(1+\iota+\frac{2\bar{\lambda}}{\varepsilon} \mathrm{e}^{ \frac{4\bar{\lambda}}{\varepsilon}}\right)\bigg(1+(\frac{2\bar{\lambda}}{\varepsilon} \mathrm{e}^{ \frac{4\bar{\lambda}}{\varepsilon}}+\iota) \mathrm{e}^{\frac{2\bar{\lambda}}{\varepsilon} \mathrm{e}^{ \frac{4\bar{\lambda}}{\varepsilon}}+\iota}\bigg),
	\end{align}
    where
\begin{align}
	\bar{\lambda} \triangleq \max _{x \in[0,1]} |\lambda(x)| . \label{blambda}
\end{align}
\end{thme}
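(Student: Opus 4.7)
The plan is to apply the NO-approximated backstepping transformation $\hat{w}(x,t) = u(x,t) - \int_0^x \hat{k}(x,y) u(y,t)\,dy$, which---since $\hat{k}$ does not exactly satisfy the kernel PDE \eqref{k7}--\eqref{k9}---yields a target system that differs from the nominal exponentially stable target system only by perturbations controlled by Lemma \ref{lema1}.

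First, I would substitute the plant \eqref{1}--\eqref{2} into $\hat{w}_t$ and perform the standard integration by parts on $\int_0^x \hat{k}(x,y) u_{yy}(y,t)\,dy$, collecting terms. This produces an interior equation $\hat{w}_t = \varepsilon \hat{w}_{xx} + \Delta(x,t)$ whose residual $\Delta$ consists of a diagonal-derivative mismatch $-[\lambda(x) + 2\varepsilon \frac{d}{dx}\hat{k}(x,x)]\,u(x,t)$, a left-boundary trace $\varepsilon \hat{k}_y(x,0)\,u(0,t)$, and a kernel-PDE residual $\varepsilon\int_0^x [(\hat{k}_{xx}-\hat{k}_{yy})(x,y) - \tfrac{\lambda(y)}{\varepsilon}\hat{k}(x,y)]\,u(y,t)\,dy$---each bounded pointwise by Lemma \ref{lema1}. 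Analogously, the boundary conditions become $\hat{w}_x(0,t) = -\hat{k}(0,0) u(0,t)$, with $|\hat{k}(0,0)| \le \iota$ because $k(0,0)=0$, and, after inserting the designed feedback $U_{NO}$, $\hat{w}_x(1,t) + \wp\hat{w}(1,t) = [k(1,1)-\hat{k}(1,1)] u(1,t)$, with the bracket also bounded by $\iota$.

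The next step is a Lyapunov estimate. Taking $V(t) = \tfrac{1}{2}\|\hat{w}[t]\|^2$ and integrating by parts, together with \eqref{wp} and the perturbed boundary data, I expect
\[
\dot V \le -\varepsilon \|\hat w_x\|^2 - \varepsilon\wp\,\hat w(1,t)^2 + R(t),
\]
where $R(t)$ collects all $O(\iota)$ cross-terms. Poincaré's inequality bounds $\|\hat w\|^2 \lesssim \|\hat w_x\|^2 + \hat w(1,t)^2$, Agmon's inequality controls the boundary traces $u(0,t)$ and $u(1,t)$ through the $H^1$ norm, and Young's inequality absorbs the $O(\iota)$ terms into the negative-definite part whenever $\iota < \iota^*$. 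This yields $\dot V \le -\varepsilon V$, hence $\|\hat w[t]\| \le e^{-\frac{\sqrt{\varepsilon}}{2}t}\|\hat w[0]\|$.

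Finally, to pass back to $u$, I would invert the Volterra transformation by successive approximation, producing a continuous kernel $\hat l$ with $\|\hat l\|_\infty \le \|l\|_\infty + O(\iota)$, so that $\|u[t]\| \le (1+\|\hat l\|_\infty)\|\hat w[t]\|$, while directly $\|\hat w[0]\| \le (1+\|\hat k\|_\infty)\|u[0]\|$. Standard Picard iteration on the kernel PDEs \eqref{k7}--\eqref{k9} and their inverse analogue delivers $\|k\|_\infty, \|l\|_\infty \le \tfrac{2\bar{\lambda}}{\varepsilon} e^{4\bar{\lambda}/\varepsilon}$, and composing with Lemma \ref{lema1} assembles the stated constant $M(\iota,\bar{\lambda})$. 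The hardest technical point is the residual $\varepsilon \hat k_y(x,0) u(0,t)$, which Lemma \ref{lema1} as written does not estimate directly: I would manage it by expressing $u(0,t)$ via the inverse transformation and invoking an Agmon-type bound, at the cost of the smallness threshold $\iota^*$.
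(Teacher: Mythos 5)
Your proposal follows essentially the same route as the paper: the same perturbed backstepping transformation whose residuals $\delta_{k0}(x)=\lambda(x)+2\varepsilon\frac{d}{dx}\hat k(x,x)$ and $\delta_{k1}=\varepsilon(\partial_{xx}-\partial_{yy})\hat k-\lambda\hat k$ are bounded by $\iota$ via Lemma \ref{lema1}, the same perturbed boundary conditions with $|\hat k(0,0)|,|k(1,1)-\hat k(1,1)|\le\iota$, the same Lyapunov function $V=\tfrac12\|\hat w\|^2$ with Poincar\'e, Agmon and Young inequalities absorbing the $O(\iota)$ terms for $\iota<\iota^*$, and the same return to $u$ through the successive-approximation bound $\|\hat l\|_\infty\le(\|k\|_\infty+\iota)e^{\|k\|_\infty+\iota}$ with $\|k\|_\infty\le\frac{2\bar\lambda}{\varepsilon}e^{4\bar\lambda/\varepsilon}$. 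The one point where you go beyond the paper is the residual $\varepsilon\hat k_y(x,0)u(0,t)$, which the paper's target system silently omits and which Lemma \ref{lema1} as stated indeed does not control; your flag is legitimate and your proposed handling (express $u(0,t)$ via the inverse transformation and an Agmon-type bound, at the cost of shrinking $\iota^*$) is consistent with how the paper treats the other boundary traces.
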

\begin{proof}
 Take the backstepping transformation
\begin{align}
\hat{w}(x, t)=u(x, t)-\int_0^x \hat{k}(x, y) u(y, t) d y.  \label{bretrans}
\end{align}
 With the control law \eqref{NOcontrol}, the target system becomes
\begin{align}
	\hat{w}_t(x, t)= &\varepsilon \hat{w}_{x x}(x, t)+\delta_{k 0}(x) u(x, t) \notag \\
	& +\int_0^x \delta_{k 1}(x, y) u(y, t) d y\\
	 \hat{w}_x(0, t)=& (k(0,0)-\hat{k}(0,0)) \hat{w}(0,t), \\
	 \hat{w}_x(1, t)=&-\wp \hat{w}(1, t)+(k(1,1)-\hat{k}(1,1)) u(1,t),
\end{align}
with
\begin{align}\
	\delta_{k 0}(x) & =2\varepsilon \frac{d}{d x}(\hat{k}(x, x))+\lambda(x)\notag \\
	& =-2\varepsilon \frac{d}{d x}(\tilde{k}(x, x)), \\
	\delta_{k 1}(x, y) & =\varepsilon \partial_{x x} \hat{k}(x, y)-\varepsilon \partial_{y y} \hat{k}(x, y)-\lambda(y) \hat{k}(x, y) \notag	\\
	& =-\varepsilon \partial_{x x} \tilde{k}(x, y)+\varepsilon \partial_{y y} \tilde{k}(x, y)+\lambda(y) \tilde{k}(x, y),
\end{align}
where
\begin{align}
	\tilde{k}(x,y)=k(x,y)-\hat{k}(x,y).
\end{align}
Using \eqref{supk}, recalling \eqref{calk}, we get
\begin{align}
	\|\delta_{k 0}\|_{\infty} \leq \iota, \label{delinf1}\\
	\|\delta_{k 1}\|_{\infty}  \leq \iota. \label{delinf2}
\end{align}
The inverse transformation $  \hat{w} \mapsto  {u}$ is given in the form
\begin{align}
	{u}(x, t)=\hat{w}(x, t)+\int_0^x \hat{l}(x, y) \hat{w}(y, t) d y . \label{xit}
\end{align}
We derive from \eqref{xit} that
\begin{align}
	u^2(1,t) \leq 2 \hat{w}^2(1,t) + q_0 \|\hat{w}[t]\|^2 ,\label{37}
\end{align}
where
\begin{align}
	q_0= & 2 \int_{0}^{1} (\hat{l}(1,y))^2 dy . \label{q0}
\end{align}
It is shown in \cite{doi:10.1137/1.9780898718607} that the direct and inverse backstepping kernels satisfy in general the relationship
\begin{align}
	\hat{l}(x, y)=\hat{k}(x, y)+\int_y^x \hat{k}(x, \xi) \hat{l}(\xi, y) d y .
\end{align}
The inverse kernel satisfies the following conservative bound
\begin{align}
	\|\hat{l}\|_{\infty} \leq\|\hat{k}\|_{\infty} \|  e^{\|\hat{k}\|_{\infty}}
\end{align}
Since $\|k-\hat{k}\|_{\infty}<\iota$, we have that $\|\hat{k}\|_{\infty} \leq\|k\|_{\infty}+\iota$. According to \cite{1369395},  we can get
\begin{align}
	\lvert k(x,y)\rvert& \leq \frac{2\bar{\lambda}}{\varepsilon} \mathrm{e}^{ \frac{4\bar{\lambda}}{\varepsilon}}  \label{42q}
\end{align}
and hence
\begin{align}
	\|\hat{l}\|_{\infty} \leq\left(\frac{2\bar{\lambda}}{\varepsilon} \mathrm{e}^{ \frac{4\bar{\lambda}}{\varepsilon}}+\iota\right) \mathrm{e}^{\frac{2\bar{\lambda}}{\varepsilon} \mathrm{e}^{ \frac{4\bar{\lambda}}{\varepsilon}}+\iota}.  \label{hatlinf1}
\end{align}
The Lyapunov function
\begin{align}
	V(t)=\frac{1}{2} \int_0^1 \hat{w}^2(x, t) d x 
\end{align}
has a derivative
\begin{align}
\dot{V}(t)=&-\varepsilon \wp \hat{w}^2(1,t)+\varepsilon \tilde{k}(1,1) \hat{w}(1,t) u(1,t)  \notag\\
&-\varepsilon \tilde{k}(0,0) \hat{w}^2(0,t) - \varepsilon \|\hat{w}_x\|^2+ \Delta_0(t)+ \Delta_1(t) \label{v1t}
\end{align}
where
\begin{align}
	& \Delta_0(t)=\int_0^1 \hat{w}(x, t) \delta_{k 0}(x) u(x, t) d x, \\
	& \Delta_1(t)=\int_0^1 \hat{w}(x, t) \int_0^x \delta_{k 1}(x, y) u(y, t) d y d x.
\end{align}
Using \eqref{delinf1}, \eqref{delinf2},  and \eqref{hatlinf1} we get
\begin{align}
	\Delta_0 & \leq\left\|\delta_{k 0}\right\|_{\infty}\left(1+\|\hat{l}\|_{\infty}\right)\|\hat{w}\|^2  \label{delta0}
\end{align}
and
\begin{align}
	\Delta_1= & \int_0^1 \hat{w}(x) \int_0^y \hat{w}(y) \int_y^x \delta_{k1}(x, \sigma) \hat{l}(\sigma, y) d \sigma d y d x\notag \\
	& +\int_0^1 \hat{w}(x) \int_0^x \delta_{k1}(x, y) \hat{w}(y) d y d x\notag\\
	\leq	& \left\|\delta_{k 1}\right\|_{\infty}\left(1+\|\hat{l}\|_{\infty}\right)\|\hat{w}\|^2
\end{align}
From Agmon’s and Young’s inequalities, we have that
\begin{align}
	\hat{w}^2(0,t) \leq \hat{w}^2(1,t) +\|\hat{w}[t]\|^2 +\|\hat{w}_x[t]\|^2. \label{51}
\end{align}
From Poincaré inequality, we have that
\begin{align}
	 -\left\|\hat{w}_x[t]\right\|^2 \leq \frac{1}{2} \hat{w}^2(1, t)-\frac{1}{4}\|\hat{w}[t]\|^2 \label{poinc}
\end{align}
Using \eqref{supk}, \eqref{37}, \eqref{hatlinf1}, \eqref{v1t}--\eqref{poinc}, and Young's  inequality, we get
\begin{align}
	\dot{V} \leq& -( \varepsilon \wp-\frac{5 \varepsilon \iota }{2}-\frac{\varepsilon}{2}) \hat{w}^2(1,t)-(  \frac{\varepsilon}{4}-  \delta^* )\|\hat{w}[t]\|^2 
\end{align}
where
\begin{align}
	\delta^*(\iota, \bar{\lambda})=2 \iota \bigg(1+\left(\frac{2\bar{\lambda}}{\varepsilon} \mathrm{e}^{ \frac{4\bar{\lambda}}{\varepsilon}}+\iota\right) \mathrm{e}^{\frac{2\bar{\lambda}}{\varepsilon} \mathrm{e}^{ \frac{4\bar{\lambda}}{\varepsilon}}+\iota} \bigg)+ \varepsilon \iota (\frac{q_0}{2}+\frac{5}{4}) \label{deltastar}
\end{align}
is an increasing function of $\iota, \bar{\lambda}$, with the property that $\delta^*(0, \bar{\lambda})=$ 0 . Recalling \eqref{wp}, there exists $\iota^*(\bar{\lambda})$ such that, for all $\iota \in\left[0, \iota^*\right]$,
\begin{align}
	\dot{V} \leq- \frac{\varepsilon}{4} V,
\end{align}
namely, $V(t)\leq V_0 e^{-\frac{\varepsilon}{4} t}$. From the direct and inverse backstepping transformations it follows that
\begin{align}
	\frac{1}{1+\|\hat{l}\|_{\infty}}\|u\| \leq \|\hat{w}\| \leq\left(1+\|\hat{k}\|_{\infty}\right)\|u\|. \label{ukw}
\end{align}
Thus we can get
$
	\|u[t]\| \leq (1+\|\hat{l}\|_{\infty}) ((1+\|\hat{k}\|_{\infty}) e^{-\frac{\sqrt{\varepsilon}}{2}t} \|u[0]\| ,
$ and recalling \eqref{42q} and \eqref{hatlinf1}, we derive \eqref{um0}.
\end{proof}
\section{ Event-Triggered NO-approximated Boundary Control Design}\label{sec4}
We strive to stabilize the closed-loop system \eqref{1}--\eqref{xu} while sampling the continuous-in-time controller $U_{NO}(t)$ given by \eqref{Ufn} at a certain sequence of time
instants $\left(t_j\right)_{j \in \mathbb{N}}$. These time instants will be given a precise
characterization later based on an event trigger. The control
input is held constant between two successive time instants and
is updated when a certain condition is met. Therefore, we define
the control input for $t \in  [t_j , t_{j+1}), j \in  \mathbb{N}$, as
\begin{align}
	U_{d} \ :=		&U_{NO}(t_j) \notag\\
	=&\int_{0}^{1} \hat{K}(y) {u}(y,t_j)dy.\label{c16}
\end{align}
Inserting the piecewise-constant control input $U_{d }$ into \eqref{xu}, the boundary condition becomes
$
u_x(1, t)+qu(1,t)= U_d .$
Define the difference between the continuous-in-time control signal $U_{NO}(t)$ in \eqref{Ufn} and the event-triggered control input $U_d $
in \eqref{c16} as $d(t)$, given by
\begin{align}
	d(t)\ :=&U_{NO}(t)-U_{d}\notag\\
	=&\int_{0}^{1} \hat{K}(y) ({u}(y,t)-{u}(y,t_j))dy,\label{c17}
\end{align}
for $t \in\left[t_j, t_{j+1}\right)$, which will be used in building the ETM.\\
The sequence of time instants $I=\left\{t_0, t_1, t_2, \ldots\right\}\ (t_0=0)$ is defined as (for $j\in \mathbb{N}$):\\
a) if $\left\{t \in \mathbb{R}_{+} \mid t>t_j \wedge d^2(t)>-\xi m(t)\right\}=\emptyset$, then the set of the times of the events is $\left\{t_0, \ldots, t_j\right\}$,\\
b) if $\left\{t \in \mathbb{R}_{+} \mid t>t_j \wedge d^2(t)>-\xi m(t)\right\} \neq \emptyset$, then the next event time is given by
\begin{equation}
	t_{j+1}=\inf \left\{t>t_j: d(t)^2 \geq-\xi m(t)\right\},\label{c20}
\end{equation}
where the positive constant $\xi$ is a design parameter and the dynamic variable $m(t)$ in \eqref{c20} satisfies the ordinary differential equation,
\begin{align}
	\dot{m}(t)= & -\eta m(t)+\lambda_d d(t)^2-\kappa_1 \| {u}[t]\|^2\notag\\
	& -\kappa_2 u^2(1,t)-\kappa_3 u^2(0,t) \label{c21}
\end{align}
for $t \in\left(t_j, t_{j+1}\right)$ with $m\left(t_0\right)=m(0)<0$.   {\color{black}The design parameter $\eta>0$ is free} and the positive design parameters $\kappa_1, \kappa_2, \kappa_3, \lambda_d  $ are to be determined later. It is worth noting that the initial condition for $m(t)$ in each time interval has been chosen such that $m(t)$ is time-continuous. Therefore, we define $m(t_j^{-})=m(t_j)=m(t_j^{+})$. Recalling  \eqref{c20} and considering the time-continuity of $m(t)$, we can
obtain the following estimate:
\begin{align}
	m(t)\leq & m(t_j) e^{-\left(\eta+\lambda_d \xi\right) (t-t_j)}  -\int_{t_j}^t e^{-\left(\eta+\lambda_d \xi\right)(t-\varepsilon)}\left(\kappa_1 \|u[\varepsilon]\|^2\right. \notag\\
	& \left.+\kappa_2  u^2(1,\varepsilon) +\kappa_3  u^2(0,\varepsilon)\right) d \varepsilon \label{c24}
\end{align}
for $t\in [t_j,t_{j+1}],\ j\in \mathbb{N}$. We choose $m(0)<0$ such that  $m(t)<0$ for $t\in [0,t_1]$ and by recursion we derive $m(t)<0$ all the time. For some design parameters $\kappa_1, \kappa_2,\kappa_3 $, the minimal dwell
time under the proposed ETM \eqref{c20} is larger than a positive constant, i.e., no Zeno behavior, which will be proved in Lemma \ref{lema3}.
\section{Main Result}\label{sec5}
The block diagram of the closed-loop system consisting of the plant, the neural operator, the controller,	and the event trigger is presented
in Fig. 1. The main result of the event trigger design are shown as follows.
\begin{figure}
	\centering
	\includegraphics[width=9cm]{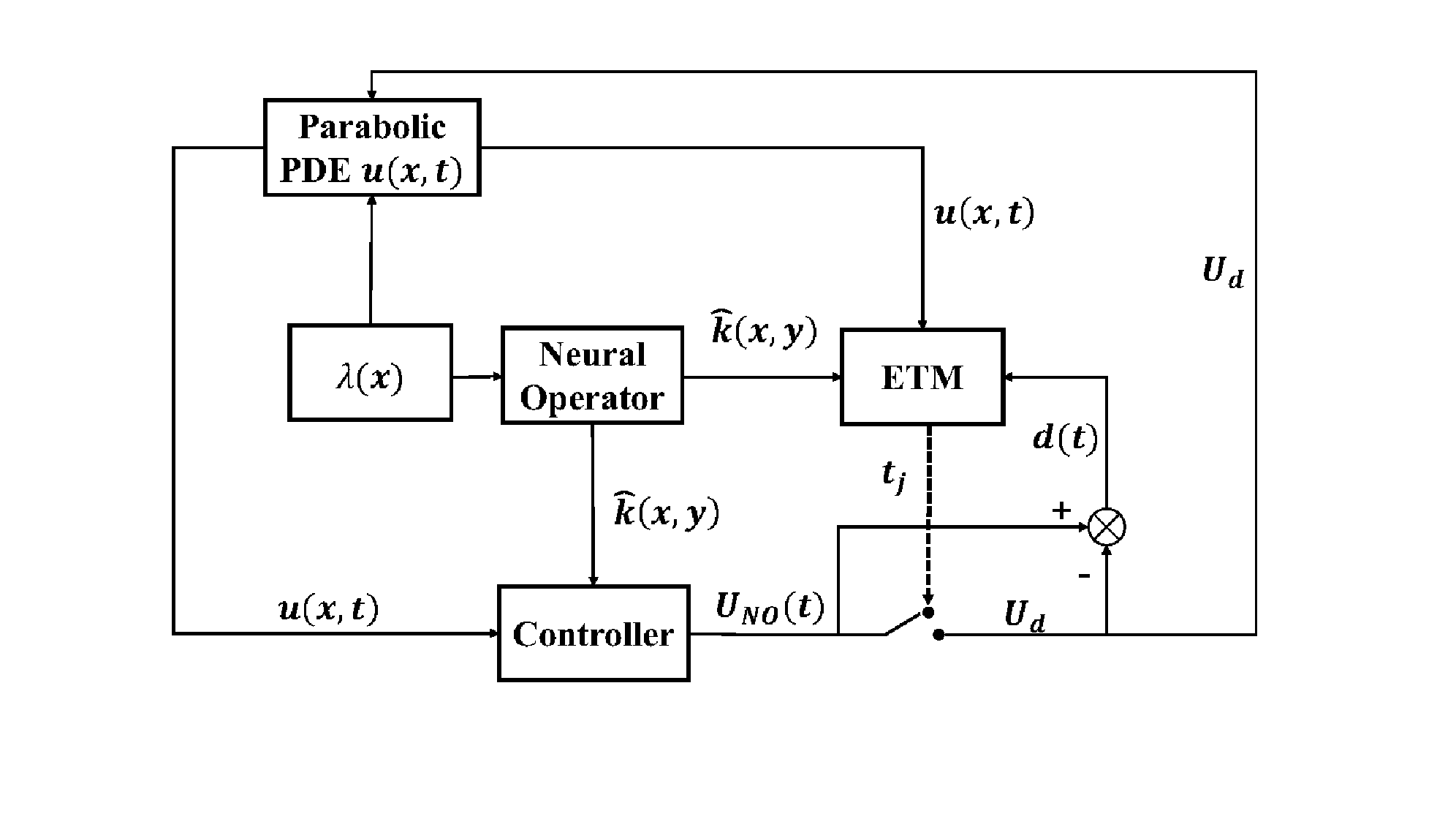}\vspace{-0.5cm}
	\caption{Block diagram of the closed-loop system.}
	\label{fig:block}
\end{figure}
\newtheorem{theorem}{Theorem}
\begin{thme} \label{theo1}
	For all initial conditions $u[0] \in L^2(0,1)  $ and $m(0) \in \mathbb{R}_{-}$, there exists a sufficiently small $\iota^*>0$ such that for approximation accuracy $\iota \in (0, \iota^*)$,	the closed-loop system, which consists of the plant \eqref{1}--\eqref{xu} and
	the event-triggered control law \eqref{c16}  with the event-triggering
	mechanism \eqref{c20}, \eqref{c21}, has the following properties:\\
	1) No Zeno behavior.\\
	2) States are exponentially convergent to zero, i.e., 
\begin{align}
		\Omega(t) \leq \Upsilon \Omega(0) e^{-\sigma t}, \label{ome}
	\end{align}
    where
    \begin{align}
    &\Omega(t) =  \|u[t]\|^2 +|m(t)|, \label{Omega}\\
    &\Upsilon= \left(1+(\iota+\frac{2\bar{\lambda}}{\varepsilon} \mathrm{e}^{ \frac{4\bar{\lambda}}{\varepsilon}})^2\right)\bigg(1+(\frac{2\bar{\lambda}}{\varepsilon} \mathrm{e}^{ \frac{4\bar{\lambda}}{\varepsilon}}+\iota)^2 \mathrm{e}^{\frac{4\bar{\lambda}}{\varepsilon} \mathrm{e}^{ \frac{4\bar{\lambda}}{\varepsilon}}+2\iota}\bigg),
    \end{align}
    and where
    \begin{align}
       \sigma=  \min\{\eta, \sigma^* \} \label{sigma1}
    \end{align}
     with
    \begin{align}
        \sigma^*(\iota,\bar{\lambda}) = &\frac{\varepsilon}{8}- 4 \iota \bigg(1+\left(\frac{2\bar{\lambda}}{\varepsilon} \mathrm{e}^{ \frac{4\bar{\lambda}}{\varepsilon}}+\iota\right) \mathrm{e}^{\frac{2\bar{\lambda}}{\varepsilon} \mathrm{e}^{ \frac{4\bar{\lambda}}{\varepsilon}}+\iota} \bigg)\notag\\
        &- \varepsilon \iota \bigg(2(\frac{2\bar{\lambda}}{\varepsilon} \mathrm{e}^{ \frac{4\bar{\lambda}}{\varepsilon}}+\iota)^2 \mathrm{e}^{\frac{4\bar{\lambda}}{\varepsilon} \mathrm{e}^{ \frac{4\bar{\lambda}}{\varepsilon}}+2\iota}+\frac{5}{2}\bigg),
    \end{align} 
    where $\bar{\lambda}$ is defined in \eqref{blambda}.
\end{thme}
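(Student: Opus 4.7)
My strategy is to run the same Lyapunov argument used in Theorem~2 on the NO-transformed state, but augmented by the dynamic triggering variable $m(t)$; the absence of Zeno behavior is handled separately by Lemma~3 (announced after~\eqref{c24}), so Theorem~3 reduces to establishing the exponential estimate \eqref{ome}. First I would apply the neural-operator backstepping transformation \eqref{bretrans} to the sampled closed-loop plant. Writing $U_d=U_{NO}(t)-d(t)$ and inserting into \eqref{xu}, the only new feature compared with the target system in the proof of Theorem~2 is an extra inhomogeneity $-d(t)$ at the right boundary: the transformed system carries the same interior and left-boundary data as before but satisfies $\hat w_x(1,t)=-\wp\hat w(1,t)+\tilde k(1,1)u(1,t)-d(t)$.

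Next, I would use the Lyapunov candidate $W(t)=\tfrac{1}{2}\int_0^1\hat w^2(x,t)\,dx-\alpha m(t)$ for some $\alpha>0$ to be fixed; this is nonnegative because the recursion \eqref{c24} keeps $m(t)<0$ along trajectories, so that $-\alpha m(t)=\alpha|m(t)|$. Differentiating along the target system reproduces every contribution already analysed between \eqref{v1t} and \eqref{deltastar}, plus a single new boundary cross-term $-\varepsilon\hat w(1,t)d(t)$, which I would split by Young's inequality as $\tfrac{\varepsilon}{2}\hat w^2(1,t)+\tfrac{\varepsilon}{2}d^2(t)$. Substituting \eqref{c21} for $-\alpha\dot m$ then adds the four contributions $-\alpha\eta|m|$, $-\alpha\lambda_d d^2$, $+\alpha\kappa_1\|u\|^2$, and $+\alpha\kappa_2 u^2(1,t)+\alpha\kappa_3 u^2(0,t)$ to $\dot W$.

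Then I would tune the parameters. The $-\alpha\lambda_d d^2$ term is used to absorb the $\tfrac{\varepsilon}{2}d^2$ created by the cross term, while $\alpha\kappa_1$, $\alpha\kappa_2$, and $\alpha\kappa_3$ are chosen small enough so that, after converting $\|u\|^2$, $u^2(1,t)$, and $u^2(0,t)$ back into $\hat w$-quantities via \eqref{ukw}, \eqref{37}, and the inverse-kernel estimate \eqref{hatlinf1}, the resulting coefficients of $\hat w^2(1,t)$, $\|\hat w\|^2$, and $\|\hat w_x\|^2$ remain strictly negative once Agmon's and Poincar\'e's inequalities \eqref{51}--\eqref{poinc} are applied as in \eqref{deltastar}. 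This yields $\dot W\le -\sigma^\star V-\alpha\eta|m|\le -\min\{\eta,\sigma^\star\}\,W$, and matching the explicit constants produces the rate $\sigma$ advertised in \eqref{sigma1}; reverting to $\|u\|$ through \eqref{ukw}, \eqref{42q}, and \eqref{hatlinf1} gives the prefactor $\Upsilon$, completing \eqref{ome}.

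The main obstacle is the joint selection of $\alpha$, $\lambda_d$, $\kappa_1$, $\kappa_2$, $\kappa_3$, and the admissible range $(0,\iota^\star)$: $\alpha\lambda_d$ must be large enough to kill the $d^2$ injected by the cross term, yet the $\alpha\kappa_i$ cannot be too large, or else the negative margin $\tfrac{\varepsilon}{4}-\delta^\star(\iota,\bar\lambda)$ already produced in Theorem~2 is destroyed; on top of that, the residual $\iota$-dependent errors from \eqref{supk}--\eqref{delinf2} still need to fit inside the remaining budget, all while respecting the constraint $\wp>\tfrac12$ from \eqref{wp}. Making these inequalities simultaneously compatible is what ultimately pins down the explicit formula for $\sigma^\star(\iota,\bar\lambda)$ and the smallness requirement on $\iota^\star$.
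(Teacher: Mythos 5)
Your proposal follows essentially the same route as the paper: transform the sampled plant via \eqref{bretrans} so that the only new feature is the $-d(t)$ term in the right boundary condition, take a Lyapunov functional combining the $L^2$ energy of $\hat w$ with $-m(t)$, substitute \eqref{c21} for $\dot m$, and tune the weights so that the damping $-\lambda_d d^2$ supplied by the triggering variable absorbs the $d^2$ injected by the boundary cross term, while Property 1 is delegated to the dwell-time computation of Lemma \ref{lema3}. Your weight $\alpha$ on $m(t)$ plays exactly the role of the paper's weight $r_0$ on the energy term in \eqref{lyapunov} (the two functionals differ only by an overall rescaling), and taking $\alpha$ small is equivalent to the paper's choice $r_0=16(\kappa_1 q_1+\kappa_2+\kappa_3 q_0)/\varepsilon$; note, however, that you cannot literally choose the $\kappa_i$ ``small enough,'' since they are bounded below by the Zeno-exclusion conditions \eqref{kp1} --- the genuine freedom is in $\alpha$ (equivalently, in making $r_0$ large), which your setup does provide. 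The one step that would fail as written is the Young split of the cross term into $\tfrac{\varepsilon}{2}\hat w^2(1,t)+\tfrac{\varepsilon}{2}d^2(t)$: Assumption 1 only guarantees $\wp>\tfrac12$ via \eqref{wp}, and the Poincar\'e step \eqref{poinc} already feeds a multiple of $\hat w^2(1,t)$ back into the estimate, so an additional fixed $\tfrac{\varepsilon}{2}\hat w^2(1,t)$ can render the coefficient of $\hat w^2(1,t)$ positive when $\wp$ is close to $\tfrac12$. The paper avoids this by using a tunable Young parameter, bounding the cross term by $r_0\varepsilon\delta_0\hat w^2(1,t)+\tfrac{r_0\varepsilon}{4\delta_0}d^2(t)$ with $\delta_0$ small, at the price of the requirement $\lambda_d\ge r_0\varepsilon/(4\delta_0)$ in \eqref{ld3}; with that substitution your argument closes and yields the rate \eqref{sigma1} and prefactor $\Upsilon$ exactly as in the paper.
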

\begin{proof}
	The detailed process is shown next.
\end{proof}
\subsection{Proof of property $1$ of Theorem $\ref{theo1}$} \label{5.1}
Taking the time derivative of \eqref{c17}, recalling \eqref{1}--\eqref{xu},  using \eqref{NOcontrol}, \eqref{c16} and \eqref{c17}, we can obtain that 	 \\
	\begin{align}
		\dot{d}(t)= & -\varepsilon \hat{K}(1) d(t)-\left(\varepsilon q \hat{K}(1)+\varepsilon \frac{d \hat{K}(x)}{d x}|_{x=1}\right)  {u}(1, t)\notag \\
		& +\int_0^1\left(\varepsilon \frac{d^2 \hat{K}(y)}{d y^2}+\varepsilon \hat{K}(1) \hat{K}(y)+\lambda(y) \hat{K}(y)\right)  {u}(y, t) d y\notag \\
		& +\varepsilon \left( \frac{d \hat{K}(x)}{d x}|_{x=0}  \right) {u}(0, t),
	\end{align}
	for $t \in \left(t_j, t_{j+1}\right)$. Therefore, we can obtain that
	\begin{align}
		\dot{d}(t)^2 \leq &\epsilon_1 d(t)^2+\epsilon_2 \|u[t]\|^2 +\epsilon_3 u^2 (1,t)+\epsilon_4 u^2 (0,t) \label{dt}
	\end{align}
	for $t \in\left(t_j, t_{j+1}\right)$, where
	\begin{align}
		\epsilon_1 =& 4\varepsilon^2 \hat{K}^2(1), \\
		\epsilon_2 = & 4\int_0^1\left(\varepsilon \frac{d^2 \hat{K}(y)}{d y^2}+\varepsilon \hat{K}(1) \hat{K}(y)+\lambda(y) \hat{K}(y)\right)  ^2 d y,\label{ep2} \\
		\epsilon_3 = & 4\left(\varepsilon q \hat{K}(1)+\varepsilon \frac{d \hat{K}(x)}{d x}|_{x=1}\right) ^2, \\
		\epsilon_4 = & 4 \varepsilon^2 \left( \frac{d \hat{K}(x)}{d x}|_{x=0}  \right) ^2 \label{ep4}
	\end{align}

\begin{lema}  \label{lema3}
	Under the event-triggered boundary controller defined in \eqref{c16}, for some positive $\kappa_1, \kappa_2,\kappa_3 $ to be chosen in \eqref{c21}, there exists a minimal dwell time ${\tau}>0$ such that $t_{j+1}-t_j > {\tau}$ for all $j \in \mathbb{N}$, which is independent of the initial conditions.
\end{lema}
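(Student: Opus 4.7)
The plan is to introduce the ratio $\phi(t) := d(t)^2/(-m(t))$, which is well-defined on each inter-event interval $[t_j,t_{j+1}]$ because the construction of $m$ described after \eqref{c24} keeps $m(t)<0$ for all $t\ge 0$. Since the control input refreshes at $t_j$ one has $d(t_j)=0$, so $\phi(t_j^{+})=0$, while the triggering rule \eqref{c20} forces $\phi(t_{j+1}^{-})=\xi$. A strictly positive lower bound on $t_{j+1}-t_j$ will therefore follow from a uniform upper bound on the growth rate of $\phi$ between these two values.

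First I would differentiate $\phi$ via the quotient rule, use Young's inequality in the form $\tfrac{d}{dt}d^2 = 2 d\dot d \le d^2 + \dot d^2$, substitute the pointwise bound \eqref{dt} on $\dot d^2$ with the constants $\epsilon_1,\ldots,\epsilon_4$ defined in \eqref{ep2}--\eqref{ep4}, and plug in the dynamics \eqref{c21} of $m$. The design parameters $\kappa_1,\kappa_2,\kappa_3$ are then fixed so that $\xi\kappa_1\ge\epsilon_2$, $\xi\kappa_2\ge\epsilon_3$, $\xi\kappa_3\ge\epsilon_4$; this makes the state-dependent terms $\epsilon_2\|u[t]\|^2+\epsilon_3 u^2(1,t)+\epsilon_4 u^2(0,t)$ appearing in the numerator of $\dot\phi$ be dominated, after division by $-m>0$ and multiplication by the non-negative factor $\xi-\phi$, by the matching $-\kappa_i$ contributions arising in $\dot m$. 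Collecting the remaining terms yields, on $(t_j,t_{j+1})$, an autonomous scalar inequality of Riccati type
\begin{align*}
\dot\phi(t) \le A + B\,\phi(t) + C\,\phi(t)^2,
\end{align*}
where $A,B,C>0$ depend only on $\xi,\eta,\lambda_d,\epsilon_1$ and on bounds on the neural-operator kernel $\hat{K}$, and are in particular independent of the index $j$ and of the initial data $u[0]$, $m(0)$. The lemma then follows by the comparison principle: with $\phi(t_j)=0$, one obtains $\phi(t)\le y(t)$ for the solution $y$ of $\dot y = A+By+Cy^2$, $y(t_j)=0$, and the continuous, strictly increasing $y$ requires the strictly positive time $\tau = \int_0^{\xi}\tfrac{ds}{A+Bs+Cs^2} > 0$ to reach $\xi$. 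Because $\phi(t_{j+1})=\xi$, this forces $t_{j+1}-t_j \ge \tau$ uniformly in $j$ and in the initial data, proving the claim.

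The main obstacle is the reorganization that produces the autonomous Riccati inequality for $\phi$: the ``cross term'' of the form $(\xi-\phi)\bigl(\kappa_1\|u[t]\|^2+\kappa_2 u^2(1,t)+\kappa_3 u^2(0,t)\bigr)/(-m)$ that survives the first cancellation cannot be controlled by $\phi$ alone, and must be re-expressed by substituting $\kappa_1\|u\|^2+\kappa_2 u^2(1)+\kappa_3 u^2(0) = -\dot m - \eta m + \lambda_d d^2$ from \eqref{c21}. The resulting logarithmic-derivative contribution $\dot m/m$ then has to be recombined with the $-\phi\,\dot m/m$ term already present in $\dot\phi$ so as to erase the remaining state dependence, and a choice of $\lambda_d$ tied to $\xi$ is required to keep the coefficient $C$ of $\phi^2$ in the final inequality finite and positive.
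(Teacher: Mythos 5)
Your overall strategy coincides with the paper's (which defers the actual computation to its reference \cite{10885941}): monitor the ratio of $d^2$ to $-m$, which runs from $0$ at $t_j^{+}$ to its triggering value at $t_{j+1}^{-}$, establish an autonomous Riccati differential inequality for it with coefficients depending only on $\xi,\eta,\lambda_d,\epsilon_1$, and read off the dwell time by the comparison principle as an integral of $1/(A+Bs+Cs^2)$. Your $\int_0^{\xi}$ versus the paper's $\int_0^1$ in \eqref{90} is only the normalization $d^2/(-m)$ versus $d^2/(-\xi m)$, and your condition $\xi\kappa_i\ge\epsilon_{i+1}$ is \eqref{kp1} up to a factor of $2$.

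The gap is in the step that is supposed to produce the autonomous Riccati inequality, and the remedy you sketch in your ``main obstacle'' paragraph does not close it. After $2d\dot d\le d^2+\dot d^2$, the bound \eqref{dt}, and the substitution of \eqref{c21}, the surviving residual is $\bigl(c-\phi\bigr)\frac{S}{-m}$ with $S:=\kappa_1\|u[t]\|^2+\kappa_2u^2(1,t)+\kappa_3u^2(0,t)$ and $c=\xi$ (or $\xi/2$ under \eqref{kp1}). Your proposed fix --- rewrite $S=-\dot m-\eta m+\lambda_d d^2$ and recombine with the $\phi\,\dot m/(-m)$ term already present in $\dot\phi$ --- is circular: it leaves $(c-2\phi)\,\dot m/m$ plus a polynomial in $\phi$, and $\dot m/m=-\eta-\lambda_d\phi+S/(-m)$ admits no state-independent upper bound precisely in the regime $\phi<c/2$ where its coefficient is positive. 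The residual is genuinely unbounded there: take $d(t_j)=0$, $-m(t_j)$ arbitrarily small and $\|u[t_j]\|$ arbitrarily large, so that $\phi\approx 0$ while $S/(-m)$ is arbitrarily large; hence no inequality $\dot\phi\le A+B\phi+C\phi^2$ with constants independent of the initial data can follow from the symmetric Young step. The missing idea is to split the cross term with a $\phi$-dependent weight, $2|d||\dot d|\le\frac{\xi}{2\phi}d^2+\frac{2\phi}{\xi}\dot d^2$: using $\epsilon_2\|u\|^2+\epsilon_3u^2(1,t)+\epsilon_4u^2(0,t)\le\frac{\xi}{2}S$ --- which is exactly what the factor of two in \eqref{kp1} delivers --- the $\dot d^2$ contribution then enters as $\phi\frac{S}{-m}$ and is cancelled exactly by the $-\phi\frac{S}{-m}$ coming from $\phi\frac{\dot m}{-m}$, at the price of only the constant $\xi/2$. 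This gives $\dot\phi\le\frac{\xi}{2}+\eta\phi+\bigl(\lambda_d+\frac{2\epsilon_1}{\xi}\bigr)\phi^2$, after which your comparison argument goes through verbatim; without this (or an equivalent estimate of $\frac{d}{dt}\bigl(|d|/\sqrt{-m}\bigr)$), the Riccati inequality is not established.
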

\begin{proof}
Like \cite{10885941}, we choose
	\begin{align}
		\kappa_1 \geq \frac{2 \epsilon_2(\hat{k})}{\xi}, \kappa_2 \geq \frac{2 \epsilon_3(\hat{k})}{\xi},\kappa_3 \geq \frac{2 \epsilon_4(\hat{k})}{\xi} , \label{kp1}
	\end{align}
where $\epsilon_2, \ \epsilon_3, \ \epsilon_4$	are given in \eqref{ep2}--\eqref{ep4} depending on the NO gain kernels. The we can derive a lower bound for the minimal dwell time $\tau$ that
		\begin{align}
		\tau=\int_0^1 \frac{1}{\bar{n}_3+\bar{n}_2 s+\bar{n}_1 s^2} d s>0 ,\label{90}
	\end{align}
where $\bar{n}_1=\frac{1}{2} \lambda_d \xi,\ \bar{n}_2=1+\epsilon_1+\xi \lambda_d+\eta,\ \bar{n}_3=1+\eta+\epsilon_1+\frac{1}{2} \xi \lambda_d$ are positive constants.
\end{proof}
The property $1$ of Theorem \ref{theo1} is obtained.
\subsection{Proof of property $2$   of Theorem $\ref{theo1}$} \label{5.2}
  It can be shown that applying the backstepping transformation \eqref{bretrans} into the system \eqref{1}--\eqref{xu} with choosing the control law in \eqref{xu} as $U_d$  between $t_j$ and $t_{j+1}$, yields the following target system, valid for $t \in\left[t_j, t_{j+1}\right), j \in \mathbb{N}$ :
\begin{align}
	\hat{w}_t(x, t)= &\varepsilon 	\hat{w}_{x x}(x, t)+\delta_{k 0}(x) u(x, t) \notag \\
& +\int_0^x \delta_{k 1}(x, y) u(y, t) d y \label{wt1}\\
 	\hat{w}_x(0, t)=&\tilde{k}(0,0) \hat{w}(0,t), \\
	\hat{w}_x(1, t)=&-\wp 	\hat{w}(1, t)+\tilde{k}(1,1) u(1,t)-d(t). \label{wt3}
\end{align}
	Using Young’s and Cauchy–Schwarz inequalities on \eqref{xit}, we can get
	\begin{align}
		\| {u}[t]\|^2 \leq& q_1 \| \hat{w}[t] \|^2  ,\label{u3}\\
		u^2(0,t) \leq&  \hat{w}^2(1,t)+\|\hat{w}_x[t]\|^2+\|\hat{w}[t]\|^2,\label{v3}  
	\end{align}
where $q_1= (1+(\int_0^1 \int_0^x \hat{l}^2(x,y)dy dx)^{\frac{1}{2}})^2$.

Define a Lyapunov function as
\begin{align}
	V(t)=&\frac{r_0}{2} \int_0^1 \hat{w}^2(x, t) d x - m(t) \label{lyapunov}
\end{align}
where $m(t)$ is defined in \eqref{c21}. Using \eqref{c21} and \eqref{wt1}--\eqref{wt3}, we obtain
\begin{align}
	\dot{V}(t)=&-r_0 \varepsilon \wp \hat{w}^2(1,t) -r_0 \varepsilon \hat{w}(1,t) d(t) \notag\\
	&+r_0 \varepsilon \tilde{k}(1,1) \hat{w}(1,t) u(1,t)-r_0 \varepsilon \tilde{k}(0,0) \hat{w}^2(0,t) \notag\\
	&-r_0 \varepsilon \|\hat{w}_x\|^2 +r_0 \Delta_0(t)+r_0 \Delta_1(t)+\eta m(t)-\lambda_d d(t)^2\notag\\
	&+\kappa_1 \| {u}[t]\|^2 +\kappa_2 u^2(1,t)+\kappa_3 u^2(0,t)
\end{align}
for $t \in\left(t_j, t_{j+1}\right)$ .
Using \eqref{supk}, \eqref{37}, \eqref{hatlinf1}, \eqref{u3}, \eqref{v3}, \eqref{delta0}--\eqref{poinc}, and  Young's  inequality, we get
\begin{align}
	\dot{V} \leq& -(r_0 \varepsilon \wp-\kappa_2  -2 \kappa_3 -r_0 \varepsilon(\frac{1}{4}+ \delta_0+ \frac{5\iota}{2})) \hat{w}^2(1,t)\notag\\
	&-(\frac{r_0 }{8}\varepsilon-\kappa_1 q_1-\kappa_2  -\kappa_3 q_0 -r_0 \delta^* )\|\hat{w}\|^2  \notag\\
	& +\eta m(t)-(\frac{r_0}{2} \varepsilon-\kappa_2 ) \|\hat{w}_x\|^2-(\lambda_d-\frac{r_0 \varepsilon}{4 \delta_0} )d(t)^2
\end{align}
for $t \in\left(t_j, t_{j+1}\right)$, where $\delta_0>0$ is to be chosen later and $\delta^*$ is given in \eqref{deltastar}. Recalling \eqref{lyapunov}, \eqref{deltastar}, \eqref{q0} and \eqref{hatlinf1}, there exists $\iota^*(\bar{\lambda})$ such that, for all $\iota \in\left[0, \iota^*\right]$,
\begin{align}
	\dot{V} \leq-\sigma V,
\end{align}
for $t \in\left(t_j, t_{j+1}\right)$ , where $\sigma$ is given in \eqref{sigma1}, with choosing
\begin{align}
	r_0 = &    \frac{16(\kappa_1 q_1+ \kappa_2  +\kappa_3 q_0 )}{\varepsilon} \\
	\delta_0 < & \frac{r_0 \varepsilon \wp-\kappa_2  -2 \kappa_3 -r_0 \frac{\varepsilon}{4} -r_0 \varepsilon \iota \frac{5}{2}}{r_0 \varepsilon},\\
	\lambda_d \geq & \frac{r_0 \varepsilon}{4 \delta_0}. \label{ld3}
\end{align} 
 Since $V(t)$ is continuous (as $\hat{w}(x,t) $ and $m(t)$ are continuous), we have that 
\begin{align}
	V\left(t_{j+1}\right) \leq e^{-\sigma\left(t_{j+1}-t_j\right)} V\left(t_j\right).
\end{align}
Hence, for any $t \geq 0$ in $t \in\left[t_j, t_{j+1}\right), j \in \mathbb{N}$,  by recursion we derive:
\begin{align}
	V(t)  \leq e^{-\sigma \left(t-t_j\right)} V\left(t_j\right) \leq e^{-\sigma t} V(0). \label{Vte}
\end{align}
Using \eqref{ukw} and recalling \eqref{Omega}, we can derive
\begin{align}
	\Omega(t) \leq& \left(1+\|\hat{k}\|_{\infty}\right)^2 \left(1+\|\hat{l}\|_{\infty}\right)^2  e^{-\sigma t} \Omega(0).
\end{align}
 Recalling \eqref{42q} and \eqref{hatlinf1}, the second property of Theorem \ref{theo1} is thus obtained.
\section{Numerical Simulations}\label{sec6}
In the simulation example, we consider the reaction-diffusion PDE with $\varepsilon=1 ;\ \lambda(x)=50\cos(8\cos^{-1}x); \ q=10$, under the initial conditions $u(x,0)=  \cos(\pi x)$. To learn the mapping $\mathcal{K}: \lambda(x) \mapsto k(x, y)$, we use the way to implement a CNN for the DeepONet branch network given in \cite{krstic2024neural}. We show the analytical kernel $k(x,y)$, learned NO-approximated kernel $\hat{k}(x,y)$, and $k(x,y)-\hat{k}(x,y)$ in Figs. \ref{fig01}--\ref{fig03}. \par
The parameters for the event-triggered mechanism are chosen as follows: $m(0)=-5,\ \xi=55,\ \eta=9.775$. To satisfy \eqref{kp1} we choose $\kappa_1=5.5\times10^{4}, \kappa_2=758, \kappa_3=1240$. Then, we choose $\lambda_d =770$ to satisfy \eqref{ld3}.\par
The numerical simulation is conducted by the finite difference method. The plant is discretized with uniform step sizes of $\Delta x=0.02$ for the space variable and $\Delta t=0.0001 $ for the time variable. The open-loop results of the reaction-diffusion system are shown in Fig. \ref{fig2}, from which we observe that the plant is open-loop unstable. The piecewise-constant control input $U_d$ defined in \eqref{c16} and the continuous-in-time control signal $U_f(t)$ used in ETM are shown in Fig. \ref{fig3}. The minimal dwell time in this case is $0.0134$s (the time step in the simulation is $0.0001$s). With the proposed event-triggered controller $U_d$ \eqref{c16}, it is shown in Figs. \ref{fig4}  that  PDE states $u(x, t)$   are convergent to zero.
 \begin{figure}[!t]
	\centering
	\subfloat[]{
		\includegraphics [width=2.5cm] {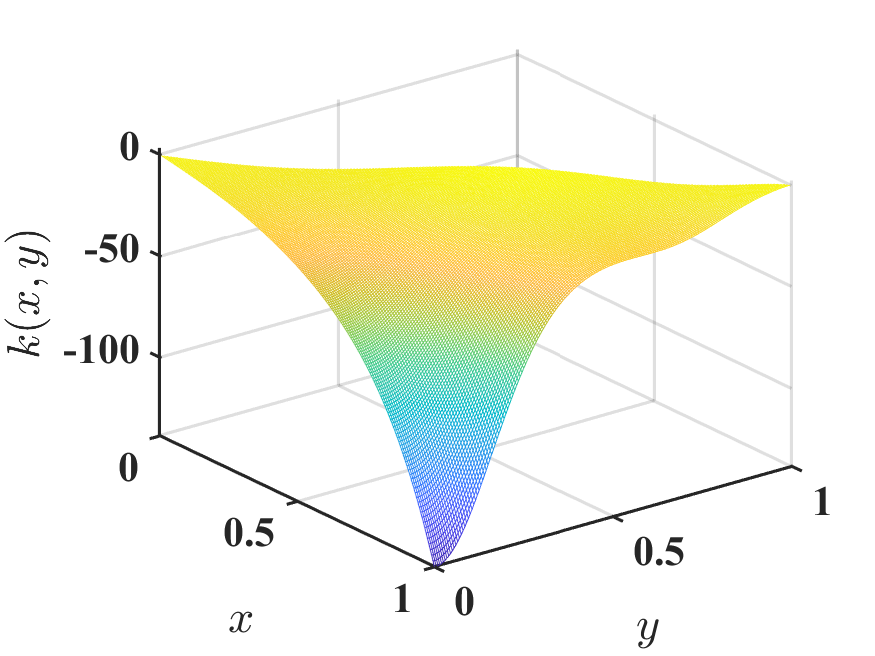}
		\label{fig01}	
	}
	\subfloat[]{
		\includegraphics [width=2.5cm] {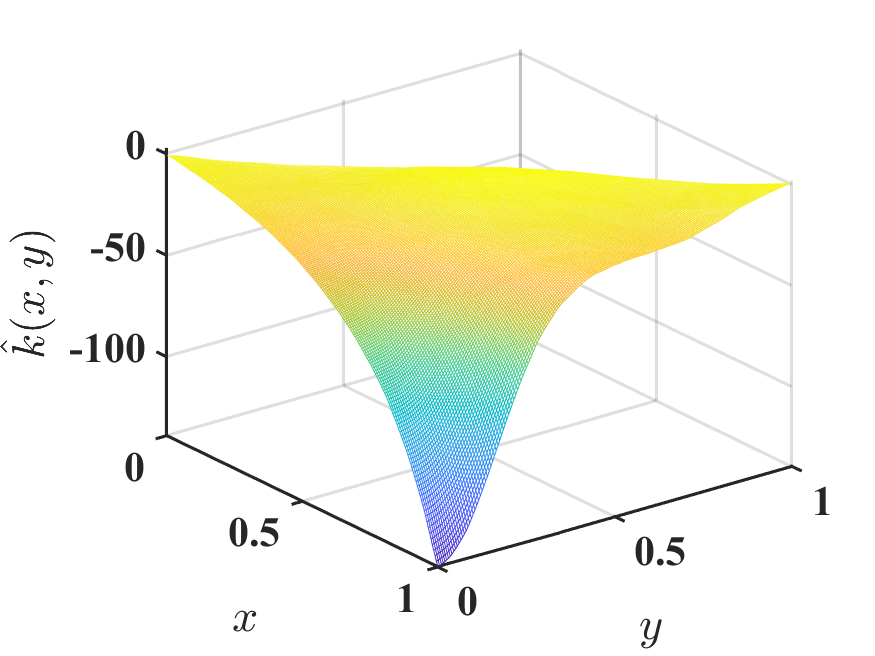}
		\label{fig02}	
	}
    \subfloat[]{
		\includegraphics [width=2.5cm] {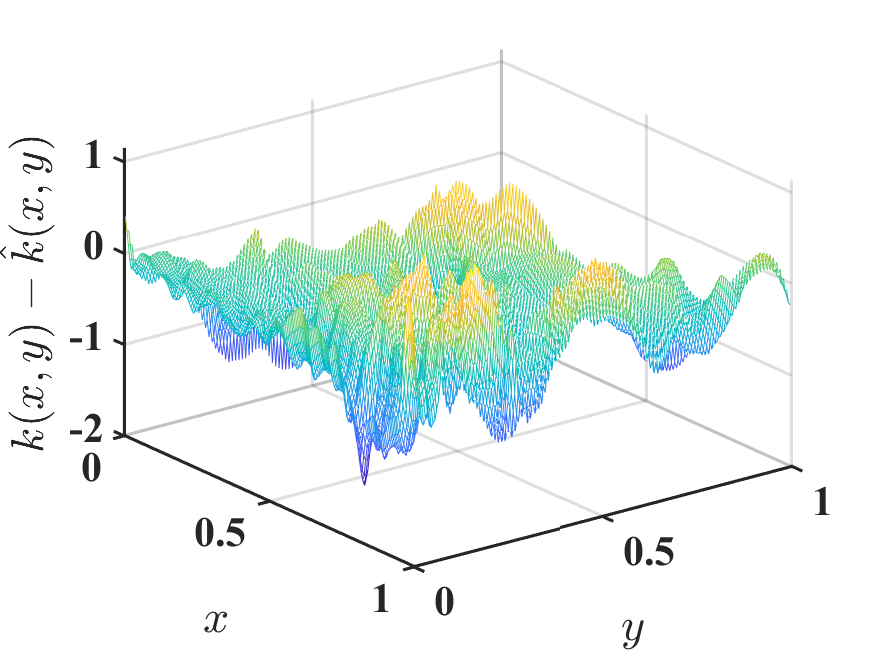}
		\label{fig03}	
	}
	\caption{Results of the kernel $k(x, y)$, learned kernel $\hat{k}(x,y)$, and the kernel error $k(x,y)-\hat{k}(x,y)$.}
\end{figure} 
\begin{figure}
	\centering
	\includegraphics[width=4cm]{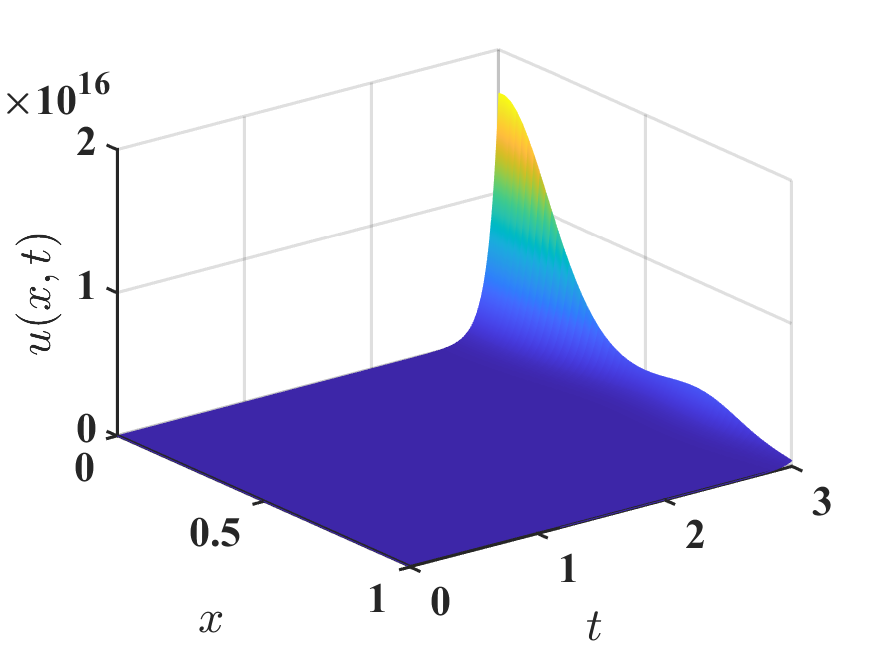}
	\caption{Results for open-loop system.}
	\label{fig2}
\end{figure}
 \begin{figure}[!t]
	\centering
	\subfloat[The ETC signal $U_d$ considered along with the CTC signal used in ETM.]{
		\includegraphics [width=4cm] {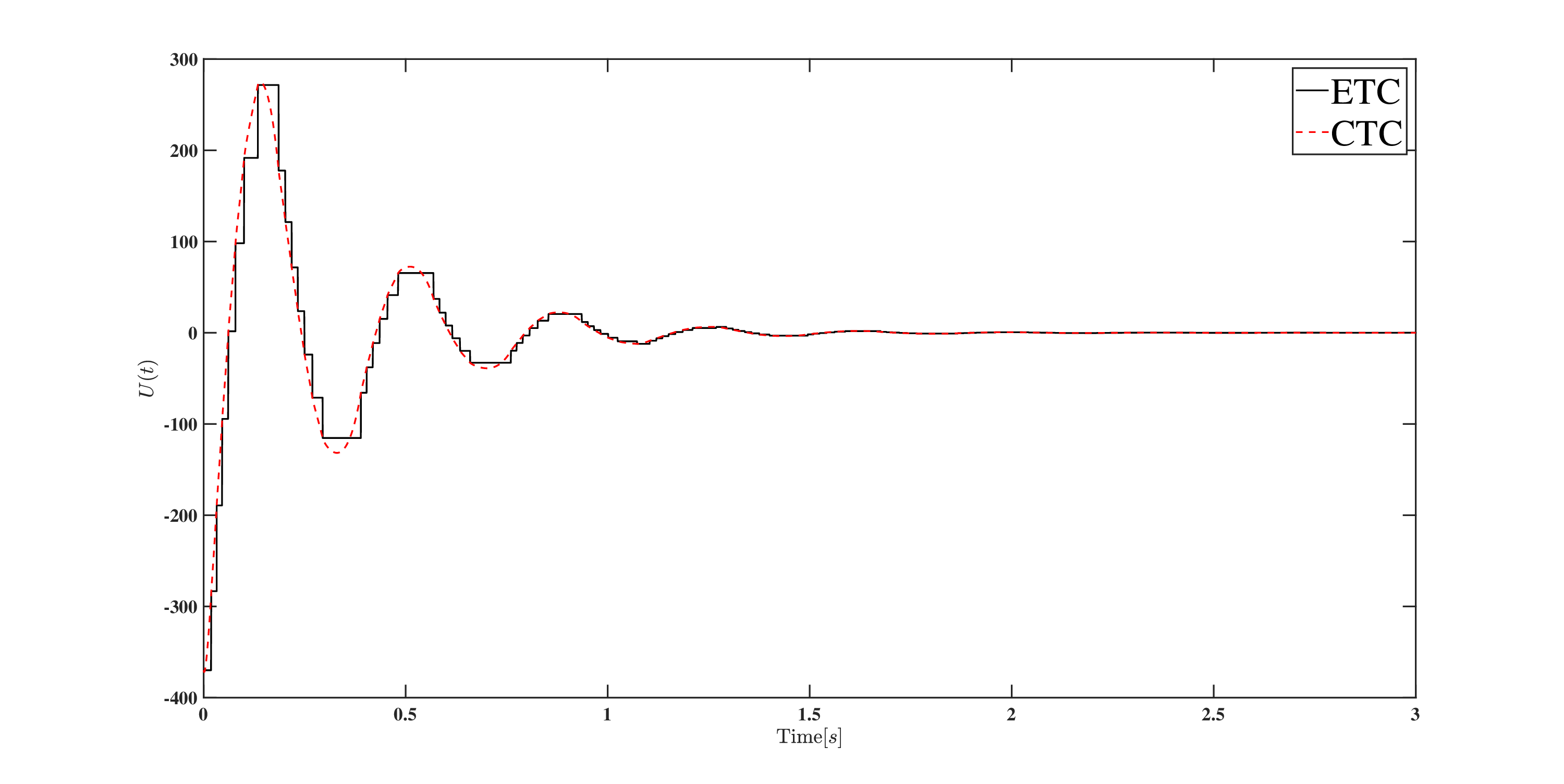}
		\label{fig3}	
	}
	\subfloat[Results for $u(x,t)$ .]{
		\includegraphics [width=4cm] {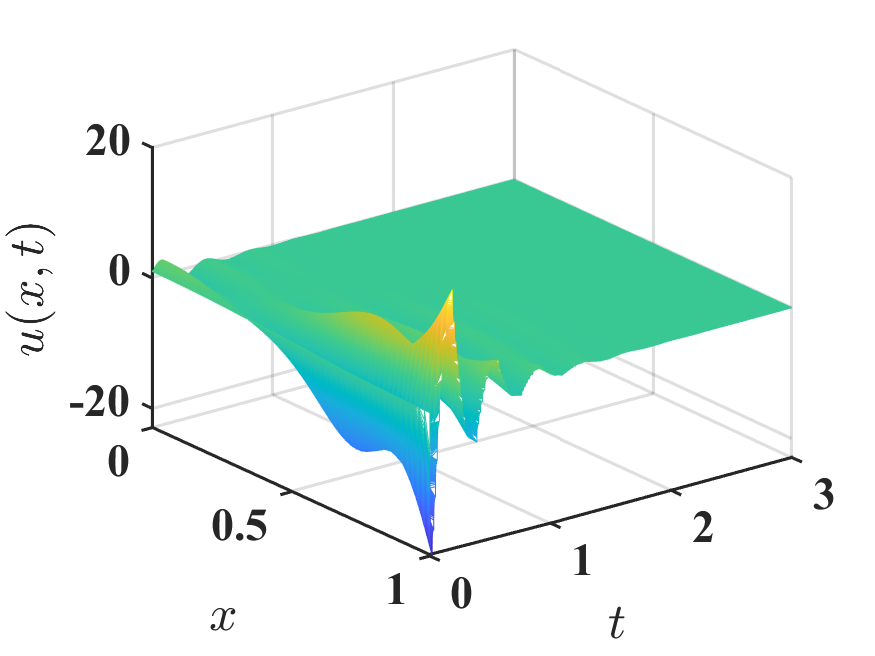}
		\label{fig4}	
	}
	\caption{Results under the event-triggered NO-approximated control input $U_d$.}
\end{figure}

\section{Conclusion}\label{sec7}
In this paper, we propose a NO-approximated event-triggered  
boundary control scheme for a parabolic PDE. We have proved that the proposed control guarantees:
1) no Zeno phenomenon occurs; 2) the plant states are exponentially
convergent to zero. The effectiveness of the proposed design
is verified by a numerical example.

	\bibliography{reference}
\bibliographystyle{plain}

\end{document}